\documentclass[11pt]{amsart}

\usepackage{url}
\usepackage{color}
\definecolor{darkgreen}{rgb}{0,0.5,0}
\usepackage[
        colorlinks, citecolor=darkgreen,
        backref,
        pdfauthor={J. Steffen Mueller, C. Stumpe},
        pdftitle={Archimedean local height differences on elliptic curves}
]{hyperref}

\usepackage{amssymb,amsmath, amsthm}
\usepackage{comment}
\usepackage{algorithmic,algorithm}
\usepackage[OT2,OT1]{fontenc}
\usepackage{graphicx}
\usepackage{epsfig}
\usepackage{fancyhdr}
\usepackage{enumerate}

\usepackage{tikz} 
\usetikzlibrary{arrows}

\usepackage[alphabetic,backrefs]{amsrefs} % for bibliography

\numberwithin{equation}{section}

\newtheorem{thm}{Theorem}[section]

\newtheorem{prop}[thm]{Proposition}

\newtheorem{lemma}[thm]{Lemma}
\newtheorem{cor}[thm]{Corollary}

\theoremstyle{definition}

\theoremstyle{remark}
\newtheorem{rk}[thm]{Remark}
\newtheorem{ex}[thm]{Example}

\newcommand\Q{\mathbb{Q}}
\newcommand\C{\mathbb{C}}

\newcommand\Z{\mathbb{Z}}
\newcommand\R{\mathbb{R}}
\newcommand\Sym{\mathop{\rm Sym}\nolimits}
\newcommand{\BP}{\mathbb{P}}

\setlength{\parindent}{0mm}
\setlength{\parskip}{1ex plus 0.5ex minus 0.5ex}
\addtolength{\hoffset}{-1cm}
\addtolength{\textwidth}{2cm}
\addtolength{\voffset}{-1cm}
\addtolength{\textheight}{1cm}

%%%%%%%%%%%%%%%%%%%%%%%%%%%%%%%%%%%%%%%%%%%%%%%%%%%%%%%%%%%%%%%%%%%%%%%%%%%%%%%%%%%%%%

\begin{document}

\title{Archimedean local height differences on elliptic curves}

\author{J. Steffen M\"uller}
\address{J. Steffen M\"uller,
  Bernoulli Institute, 
  University of Groningen,
  Nijenborgh 9,
  9747 AG Groningen,
  The Netherlands
}
\email{steffen.muller@rug.nl}

\author{Corinna Stumpe}
\address{Corinna Stumpe, Institut f\"ur Mathematik,
          Carl von Ossietzky Universit\"at Oldenburg,
          26111 Oldenburg, Germany}
\email{corinnastumpe@googlemail.com }

%=========================================================================

\begin{abstract} \setlength{\parskip}{1ex} \setlength{\parindent}{0mm}
  To compute generators for the Mordell-Weil group of an elliptic curve over a number
  field, one needs to bound the difference between the naive and the canonical height from
  above. We give an elementary and fast method to compute an upper bound for the local
  contribution to this difference at an archimedean place, which sometimes gives better
  results than previous algorithms.
\end{abstract}

\maketitle

%=========================================================================

%%%%%%%%%%%%%%%%%%%%%%%%%%%%%%%%%%%%%%%%%%%%%%%%%%%%%%%%%%%%%%%%%%%%%%%%%%%%%

\section{Introduction}\label{S:intro}
Let $E$ be an elliptic curve defined over a number field $K$. By the Mordell-Weil theorem the 
$K$-rational points on $E$ form a finitely generated group 
\[
  E(K) \cong \Z^r\times \mathrm{Tor}(E(K));
  \]
here $r\ge 0$ is the rank of $E/K$ and
$\mathrm{Tor}(E(K))$ is the (finite) torsion subgroup of $E(K)$.
One of the fundamental computational problems in the study of the arithmetic of elliptic
curves is to compute generators for $E(K)$. Applications of this include, for instance, the numerical
verification of the full conjecture of Birch and Swinnerton-Dyer in examples, as well as
the computation of $S$-integral points on $E$, e.g. using the recent approach of von
K\"anel and Matschke~\cite{vKM}.

Generators of $\mathrm{Tor}(E(K))$ are typically easy to find.
No effective method for the computation of $r$ is known, but there are still several methods which
often succeed in practice. Suppose that we know $r$ and 
$Q_1, \ldots, Q_r \in E(K)$ whose classes generate a finite index subgroup of 
$E(K)/\mathrm{Tor}(E(K))$.
The final step is then to deduce generators of $E(K)$ from this. This is done 
by saturating the lattice generated by $Q_1,\ldots, Q_r$ inside the Euclidean vector space
$(E(K)\otimes \R, \hat{h})$, where $\hat{h}$ is the canonical height.
The most widely used saturation algorithm is due to Siksek~\cite{Sik} and requires, in particular, an
algorithm to enumerate points on $E(K)$ of canonical height bounded by a fixed real number
$B$ (this set is finite by the Northcott property). 

In practice, this is done by first computing an upper bound $\beta$ for the difference between $\hat{h}$ and the
naive height $h:E(K)\to \R$; the points with canonical height bounded by
$B$ are then contained in
\[
  \{P \in E(K) : h(P) \le B+ \beta\},
\]
which can be enumerated for reasonably small $B+\beta$.
Note that the heights we consider are logarithmic, so that $\beta$
shows up exponentially in the size of the search space. It is therefore of great practical importance
to make $\beta$ as small as possible. At the same time, it is desirable to
keep the computation of $\beta$ reasonably fast.

The standard approach for bounding the difference $h-\hat{h}$ is to write it as a sum of
local terms, one for each place of $K$, and to bound the local contributions individually,
see~\cite{CPS} or Section~\ref{S:Diff} below. 
For non-archimedean places, optimal bounds are given
in~\cite{CPS}.
Our main contribution is Theorem~\ref{T:Bound}, which provides an elementary method for bounding the 
local contribution at an archimedean place. This method is 
extremely fast in practice, and yields better results than other existing approaches in many
examples. The approach is analogous to an algorithm due to 
Stoll~\cite{Sto1}, with modifications by 
Stoll and the first-named author \cite{MS2} for Jacobians of 
genus 2 curves and to Stoll~\cite{Sto2} for Jacobians of hyperelliptic genus 3 curves. 
In the case of elliptic curves, the validity of our formulas can be established
using essentially only linear algebra.

This article is partially based on the second-named author's Master thesis~\cite{Stu}.
We thank Michael Stoll for suggesting this project and Peter Bruin for answering several questions about his paper~\cite{Bru} and the
corresponding code.

%%%%%%%%%%%%%%%%%%%%%%%%%%%%%%%%%%%%%%%%%%%%%%%%%%%%%%%%%%%%%%%%%%%%%%%%%%%%%
\section{Action of the two-torsion subgroup}\label{S:2-tors}
In this section, we let $K$ be an algebraically closed field of characteristic zero and 
let $E/K$ be an elliptic curve, given by a Weierstrass equation
\begin{equation}\label{W_eqn}
y^2 + a_1xy + a_3y = x^3 + a_2x^2 + a_4x + a_6\,,
\end{equation}
with point at infinity $O$. We denote by $b_2,\ldots,b_8$ the usual $b$-invariants of
$E$.
Let $\kappa:E \to \BP^1$ be the $x$-coordinate map with respect to the given
equation~\eqref{W_eqn}, extended to all of $E$ by setting
$\kappa(O) = (1:0)$. Given a representative $(x_1,x_2)$ for $\kappa(P)$, 
we have $\kappa(2P) = \delta(x_1,x_2)$, where
$\delta =(\delta_1,\delta_2)$, and 
\begin{align*}
 \delta_1(x_1,x_2)  &= x_1^4 - b_4x_1^2x_2^2 - 2b_6x_1x_2^3 - b_8x_2^4\, ,\\
 \delta_2(x_1,x_2)  &= 4x_1^3x_2 + b_2x_1^2x_2^2 + 2b_4x_1x_2^3 + b_6x_2^4\, .
\end{align*}

The purpose of the present section is to prove an explicit version of the following result.
\begin{prop}\label{P:main}
  There are quadratic forms $y_1,y_2,y_3 \in K[x_1,x_2]$ and constants $a_{ij}, b_{jk}
  \in K$, depending only on $E$, such that for $i=1,2$ and $j =1,2,3$ we have  
  \[
  x_i^2 = \sum^3_{j=1} a_{ij}y_j(x_1,x_2)\;\text{ and }\;y_j(x_1,x_2)^2 = \sum^2_{k=1}
  b_{jk}\delta_k(x_1,x_2)
  \]
  in $K[x_1,x_2]$.
\end{prop}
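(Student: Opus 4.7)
The strategy is to construct the three quadratic forms $y_1, y_2, y_3$ explicitly in terms of the $x$-coordinates of the non-trivial $2$-torsion points of $E$, and then to verify both families of identities by short direct computations.

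First, I would introduce $e_1, e_2, e_3 \in K$ as the roots of the $2$-division polynomial
\[
  4 x^3 + b_2 x^2 + 2 b_4 x + b_6 \in K[x];
\]
these are precisely the $x$-coordinates of the three non-trivial $2$-torsion points of $E$, and they are pairwise distinct because $E$ is non-singular. For $j = 1, 2, 3$, I would then define the binary quadratic form
\[
  y_j(x_1, x_2) := x_1^2 - 2 e_j \, x_1 x_2 - \tfrac{1}{2}\bigl(4 e_j^2 + b_2 e_j + b_4\bigr)\, x_2^2.
\]
The motivation behind this choice is that the pencil $\langle \delta_1, \delta_2 \rangle$ encodes the duplication map $\kappa(P) \mapsto \kappa(2P)$, which factors through the three intermediate $2$-isogenies; correspondingly, one expects the pencil to contain exactly three perfect squares $y_j^2$, one associated to each non-trivial $2$-torsion point.

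Next, I would establish the second family of identities in the sharp form
\[
  y_j(x_1, x_2)^2 \;=\; \delta_1(x_1, x_2) \;-\; e_j\, \delta_2(x_1, x_2),
\]
which yields $b_{j1} = 1$ and $b_{j2} = -e_j$. This is done by expanding both sides and comparing coefficients of $x_1^{a} x_2^{b}$ for $a + b = 4$. The $x_1^4$, $x_1^3 x_2$, and $x_1^2 x_2^2$ coefficients match immediately from the definition of $y_j$. The $x_1 x_2^3$ coefficient reduces exactly to the vanishing of the $2$-division polynomial at $e_j$, namely $4 e_j^3 + b_2 e_j^2 + 2 b_4 e_j + b_6 = 0$. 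The $x_2^4$ coefficient reduces, after using that same relation to eliminate $e_j^4$ and $b_6$, to the standard relation among the $b$-invariants, $4 b_8 = b_2 b_6 - b_4^2$.

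Finally, for the first family of identities it suffices to show that $y_1, y_2, y_3$ are linearly independent over $K$; since the space of binary quadratic forms in $x_1, x_2$ is $3$-dimensional, the $y_j$ are then a basis, and the constants $a_{ij}$ are simply the coordinates of $x_i^2$ with respect to it. The matrix whose $j$-th row lists the coefficients of $y_j$ in the basis $(x_1^2,\, x_1 x_2,\, x_2^2)$ has rows $\bigl(1,\; -2 e_j,\; -2 e_j^2 - \tfrac{1}{2} b_2 e_j - \tfrac{1}{2} b_4\bigr)$; subtracting a $K$-linear combination (independent of $j$) of the first two columns from the third transforms it into the matrix with rows $(1,\, -2 e_j,\, -2 e_j^2)$, whose determinant is $4 \prod_{i < j}(e_j - e_i) \neq 0$ since the $e_j$ are distinct.

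The only genuinely nontrivial step is the matching of the $x_2^4$ coefficient, where the identity $4 b_8 = b_2 b_6 - b_4^2$ must be invoked; everything else is routine polynomial expansion and a Vandermonde-type linear algebra argument, in line with the paper's remark that essentially only linear algebra is needed.
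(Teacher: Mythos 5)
Your proof is correct, and it arrives at exactly the same forms $y_j$ and the same coefficients $a_{ij}$, $b_{jk}$ as the paper, but by a genuinely more elementary route. The paper identifies the $y_j$ conceptually: it lifts the translation matrices $M_T$ to a subgroup $G_E \subset \mathrm{SL}_2(K)$ isomorphic to $Q_8$ (Lemmas~\ref{M_Tmultiplication} and~\ref{G_E}), observes that the symmetric square of the standard representation of $G_E$ factors through $E[2]$ and decomposes as $\bigoplus_{T\ne O} e_2(\cdot,T)$, and recognizes each $y_T$ as the eigenform for the corresponding Weil-pairing character; it then argues that $\delta_1,\delta_2$ span the $2$-dimensional space of $E[2]$-invariant quartics and reads off $b_{jk}$ from a computation. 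You instead simply write down the $y_j$ (equivalently, with $e_j = x(T_j)$ and $f'(x(T))-x(T)^2 = \tfrac12(4e_j^2+b_2e_j+b_4)$ these are the paper's $y_{T_j}$), prove the sharp identity $y_j^2 = \delta_1 - e_j\delta_2$ by coefficient comparison using the $2$-division polynomial relation and $4b_8 = b_2b_6-b_4^2$, and obtain linear independence from a Vandermonde determinant after a column reduction. Your approach avoids the representation-theoretic machinery entirely, at the cost of some motivation for where the $y_j$ come from; the paper's approach explains structurally why the pencil $\langle\delta_1,\delta_2\rangle$ contains exactly three squares and why they are indexed by the non-trivial $2$-torsion points, which is the conceptual content your construction takes as given.
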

The constants $a_{ij}$ and $b_{jk}$ are given in~\eqref{aij} and~\eqref{bjk},
respectively.

For \( T \in E[2] \) let \( +_T :E \to E\) be translation by $T$.
Since $-(T+P)= T-P$, the map $+_T$ descends to a map on $\BP^1$. In fact
there is a linear transformation \( \mathfrak{m}_T \) on \( \mathbb{P}^1 \) such that \( \kappa \circ +_T = \mathfrak{m}_T \circ \kappa \). 
A simple calculation shows that $\mathfrak{m}_T$ is represented any non-trivial scalar
multiple of the matrix
\[ M_T := \begin{cases} E_2 &,\; T=O \\ \begin{pmatrix}
x(T) & f'(x(T))-x(T)^2 \\ 1 & -x(T)
\end{pmatrix} &,\; T \not = O \end{cases} \]
where \(f = x^3 + \frac{b_2}{4}x^2+\frac{b_4}{2}x+\frac{b_6}{4} \). 

For the proof of Proposition~\ref{P:main}, 
we analyze the action of \(E[2] \) on the space of homogeneous polynomials in two
variables of degree~2 and~4, respectively.
We first lift the transformation matrices $M_T$ to a
subgroup \( G_E \) of \( \mathrm{SL}_2(K) \) such that \( E[2] \cong G_E / \lbrace \pm E_2 \rbrace \).
Let \( e_2 :E[2]\times E[2] \to \mu_2 \) denote the Weil pairing; then $e_2(T, T') =
\varepsilon(T) \varepsilon(T') \varepsilon(T+T')$, where $\varepsilon(O) :=1$ and
$\varepsilon(T) := -1$ for $T\in E[2] \setminus \{O\}$.
\begin{lemma}\label{M_Tmultiplication}
Let \(
  T, T' \in E[2] \). Then  we have
\[ M_{T'} M_{T} = \begin{cases} \varepsilon(T) \det(M_{T}) E_2 \,,& T=T' \\ e_2(T,
T') M_{T} M_{T'} \,,& T \neq T' . \end{cases} \]
\end{lemma}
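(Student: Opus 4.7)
The plan is to verify the identity in three disjoint cases, corresponding to the three possibilities $T=T'=O$, $T=T'\neq O$, and $T\neq T'$ (with the sub-case that one of them may be $O$), and in the remaining case to compute both sides explicitly.

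First I would dispense with the cases involving $O$. If either $T$ or $T'$ equals $O$, then the corresponding $M$ is the identity, so the matrices commute; and the Weil pairing factor $e_2(T,T')=1$ because $\varepsilon(O)=1$. This settles the $T\neq T'$ assertion when one of the two is trivial, and the $T=T'=O$ assertion follows since $\det(E_2)=1$ and $\varepsilon(O)=1$.

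Next, for $T=T'\neq O$, I would simply observe from the defining formula that $\operatorname{tr}(M_T)=x(T)+(-x(T))=0$. By Cayley--Hamilton this gives
\[
M_T^2 = -\det(M_T)\,E_2.
\]
Since $\varepsilon(T)=-1$ in this case, the right-hand side equals $\varepsilon(T)\det(M_T)\,E_2$, as required. (A direct expansion of $\det(M_T)$ shows $\det(M_T)=-f'(x(T))$, consistent with the formulas that will be used in the next case.)

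The substantive case is $T\neq T'$ with both non-trivial, where the claim becomes $M_{T'}M_T=-M_TM_{T'}$ since $\varepsilon$ takes the value $-1$ on each of $T$, $T'$, $T+T'$, so $e_2(T,T')=-1$. Writing $\alpha=x(T)$ and $\beta=x(T')$, I would multiply out $A:=M_TM_{T'}+M_{T'}M_T$. The off-diagonal entries of $A$ are zero by inspection, and both diagonal entries reduce to the same expression
\[
f'(\alpha)+f'(\beta)-(\alpha-\beta)^2 .
\]
The key input now is that $\alpha$ and $\beta$ are two of the three roots of $f$, the third being the $x$-coordinate $\gamma$ of the remaining non-trivial $2$-torsion point; factoring $f(x)=(x-\alpha)(x-\beta)(x-\gamma)$ gives $f'(\alpha)=(\alpha-\beta)(\alpha-\gamma)$ and $f'(\beta)=(\beta-\alpha)(\beta-\gamma)$, whose sum collapses to $(\alpha-\beta)^2$. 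Hence $A=0$, proving the anticommutation.

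The only step that is not entirely formal is the identification of $\alpha,\beta$ as roots of $f$, which follows because $4f$ is the $2$-division polynomial of $E$; beyond this, everything reduces to short $2\times 2$ matrix manipulations and one symmetric-function identity, so I expect no serious obstacle.
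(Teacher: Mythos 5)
Your proof is correct and essentially recapitulates the paper's explicit $2\times2$ computation, but the key step is handled slightly differently. In the substantive case $T\neq T'$ (both non-trivial), the paper computes $M_{T'}M_T$ explicitly, factors out $(x(T)-x(T'))$, and invokes the group law on $E$ to identify the residual matrix with $M_{T+T'}$, from which $M_{T'}M_T=-M_TM_{T'}$ follows by symmetry. You instead compute the anticommutator $M_TM_{T'}+M_{T'}M_T$ directly, observe it is $\bigl(f'(\alpha)+f'(\beta)-(\alpha-\beta)^2\bigr)E_2$, and kill the scalar by factoring $f=(x-\alpha)(x-\beta)(x-\gamma)$. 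This side-steps any need to know or verify the addition formula for $x(T+T')$; what you use is only that $\alpha$ and $\beta$ are two roots of the cubic $f$, which follows because $4f$ is the $2$-division polynomial. (In fact you don't even need $\gamma$ to be the $x$-coordinate of $T+T'$; any third root works for the symmetric-function identity.) Your use of Cayley--Hamilton for $T=T'\neq O$ via $\operatorname{tr}(M_T)=0$ is also a clean substitute for the paper's direct specialization. Both routes are short and elementary; yours trades the group-law input for a factorization, which is arguably a touch cleaner.
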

\begin{proof}
  The assertion is trivial when $T=O$ or $T'=O$.
Suppose that \( T\ne O \) and \(T'\ne O \).
It is easy to compute
\[ M_{T'} M_{T} 
= \begin{pmatrix}
x(T)x(T') + 2 x(T')^2+\frac{b_2}{2} x(T') + \frac{b_4}{2} & (x(T)-x(T')) (2x(T) x(T') - \frac{b_4}{2}) \\
x(T)-x(T') & 2x(T)^2 + \frac{b_2}{2} x(T) + \frac{b_4}{2} + x(T) x(T')
\end{pmatrix}. \]
In particular, \( M_{T}^2=-\det(M_{T}) E_2 \).
If \(T\) and \(T'\) are distinct, the group law on \(E \) shows
\begin{align*}
 M_{T'} M_{T} 
&= (x(T) - x(T')) \begin{pmatrix}
x(T + T') & 2x(T)x(T') - \frac{b_4}{2} \\ 1 & -x(T + T')
\end{pmatrix} \\
&= (x(T)-x(T')) \begin{pmatrix}
x(T + T') & f'(x(T+T')) - x(T + T')^2 \\ 1 & -x(T + T')
\end{pmatrix}
= - M_{T} M_{T'},
\end{align*}
which proves the result.
\end{proof}

Lemma~\ref{M_Tmultiplication} shows that the classes of the matrices $M_T$ form a subgroup of \(
\mathrm{PSL}_2(K) \). We now lift this to a subgroup of 
\( \mathrm{SL}_2(K) \).

\begin{lemma}\label{G_E}
For \( T \in E[2] \) let \( \gamma_T \in K^\times \) such that \( \gamma_T^2= \det(M_T)^{-1}
  \) and let \( \tilde{M}_T:= \gamma_T M_T \). Then
\[ G_E := \lbrace \pm \tilde{M}_T \, \vert \, T \in E[2] \rbrace \]
is a subgroup of \( \mathrm{SL}_2(K) \). Moreover, \( G_E \) is isomorphic to the
  quaternion group $Q_8$, and
\[ E[2] \cong G_E / \lbrace \pm E_2 \rbrace .\]
\end{lemma}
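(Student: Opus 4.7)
The plan is to establish the three assertions about \(G_E\) in sequence, leaning on Lemma~\ref{M_Tmultiplication} at every step. That \(\tilde M_T \in \mathrm{SL}_2(K)\) is immediate from \(\gamma_T^2 = \det(M_T)^{-1}\). To show \(G_E\) is closed under multiplication (inverses then come for free by finiteness), I would split into cases. When \(T = O\) or \(T' = O\), the matrix \(\pm E_2\) is already in \(G_E\) and the claim is trivial. When \(T = T' \neq O\), Lemma~\ref{M_Tmultiplication} gives \(M_T^2 = -\det(M_T)E_2\), hence \(\tilde M_T^2 = -E_2 \in G_E\).

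The substantive case is \(T \neq T'\), both non-zero. Inspecting the explicit product computed inside the proof of Lemma~\ref{M_Tmultiplication}, the rightmost matrix there is by definition precisely \(M_{T+T'}\), so
\[
M_{T'} M_T = \bigl(x(T) - x(T')\bigr)\, M_{T+T'}.
\]
Consequently \(\tilde M_{T'}\tilde M_T = \gamma_{T'}\gamma_T \bigl(x(T)-x(T')\bigr)\, M_{T+T'}\), and this equals \(\pm\tilde M_{T+T'}\) exactly when
\[
\bigl(x(T)-x(T')\bigr)^2 \det(M_{T+T'}) = \det(M_T)\det(M_{T'}).
\]
This determinant identity is the main obstacle I anticipate. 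I would prove it symbolically: the formula for \(M_T\) yields \(\det(M_T) = -f'(x(T))\) for \(T\neq O\), and since the \(x\)-coordinates of non-zero two-torsion points are the roots of the cubic \(f\), one has \(f(x) = \prod_{i=1}^3(x-x(T_i))\). Substituting, both sides collapse to \(-\bigl(x(T_1)-x(T_2)\bigr)^2\bigl(x(T_1)-x(T_3)\bigr)\bigl(x(T_2)-x(T_3)\bigr)\) after choosing the labelling so that \((T,T',T+T')\) is a permutation of \((T_1,T_2,T_3)\).

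To finish, I would pin down \(|G_E|=8\) by observing that for distinct \(T \neq T'\) the projective classes \(\mathfrak{m}_T\) and \(\mathfrak{m}_{T'}\) are distinct translations on \(\mathbb{P}^1\), so \(\tilde M_T\) and \(\tilde M_{T'}\) are not scalar multiples of one another; the four sets \(\{\pm\tilde M_T\}_{T\in E[2]}\) are therefore pairwise disjoint, each of size 2. For the quaternion structure, write \(i,j,k\) for \(\tilde M_{T_1},\tilde M_{T_2},\tilde M_{T_3}\); the identities \(i^2=j^2=k^2=-E_2\) just established, together with the anticommutations \(ij=-ji\), etc., which come from \(e_2(T,T')=-1\) in Lemma~\ref{M_Tmultiplication}, are precisely the defining relations of \(Q_8\), and on a group of order \(8\) this forces \(G_E \cong Q_8\). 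Finally, the surjection \(G_E \to E[2]\) sending \(\pm\tilde M_T \mapsto T\) is well defined, is a homomorphism by the closure computation, and has kernel \(\{\pm E_2\}\), so \(E[2]\cong G_E/\{\pm E_2\}\).
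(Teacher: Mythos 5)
Your proof is correct and essentially follows the paper's route, but I want to flag one point where you do considerably more work than necessary. The paper deduces $M_{T_1}M_{T_2} = \gamma M_{T_3}$ for some unit $\gamma$ abstractly from the functional identity $\kappa\circ +_{T_3} = \mathfrak{m}_{T_1}\circ\mathfrak{m}_{T_2}\circ\kappa$, and then uses $\det(\tilde M_{T_1}\tilde M_{T_2})=1=\det(\tilde M_{T_3})$ to conclude $\gamma_{T_1}\gamma_{T_2}\gamma = \pm\gamma_{T_3}$, i.e.\ closure. You instead read off the explicit value $\gamma = x(T)-x(T')$ from the matrix product already computed in the proof of Lemma~\ref{M_Tmultiplication}, which is fine and arguably more concrete.

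What you call ``the main obstacle,'' the determinant identity
\[
\bigl(x(T)-x(T')\bigr)^2\det(M_{T+T'}) = \det(M_T)\det(M_{T'}),
\]
requires no symbolic verification at all: it is literally what you get by applying $\det$ to both sides of your own identity $M_{T'}M_T = (x(T)-x(T'))M_{T+T'}$. So the proposed computation with $f'(x(T_i))$ and the factorization of $f$, while correct, is redundant; the identity is automatic, and this is exactly the observation the paper exploits to finish in one line. Your argument for $|G_E|=8$ and the $Q_8$ presentation fills in the paper's ``the remaining statements are clear'' and is fine; the only cosmetic quibble is that $\mathfrak{m}_T$ is not a translation of $\mathbb{P}^1$ but the fractional linear transformation induced by translation on $E$, though your point (injectivity of $T\mapsto\mathfrak{m}_T$) stands.
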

\begin{proof}
Obviously  \( G_E \subset \mathrm{SL}_2(K) \) and \(G_E \) does not depend on the
choice of \( \gamma_T \). 
By Lemma \ref{M_Tmultiplication}, we have $M^{-1} \in G_E$ for $M \in G_E$.
Let \( T_1,T_2,T_3 \in E[2] \) be nontrivial and pairwise distinct.
Since
\[ \kappa \circ +_{T_3}
= \kappa \circ +_{T_1} \circ +_{T_2} 
= \mathfrak{m}_{T_1} \circ \kappa \circ +_{T_2}
= \mathfrak{m}_{T_1} \circ \mathfrak{m}_{T_2} \circ \kappa, \]
%the matrix \( M_{T_1} M_{T_2} \) is a transformation matrix \( M_{T_3} \) for \( \mathfrak{m}_{T_3} \), so 
we have
\[ M_{T_1} M_{T_2} = \gamma M_{T_3} \]
for a unit \( \gamma \in K^\times \). 
From \( \det(\tilde{M}_{T_1} \tilde{M}_{T_2}) = \det(\gamma_{T_1} \gamma_{T_2} \gamma
  M_{T_3}) =1 \) we deduce that \( \gamma_{T_3} \) is 
equal to \( \gamma_{T_1} \gamma_{T_2} \gamma \) up to sign, so \(
  \tilde{M}_{T_1} \tilde{M}_{T_2} G_E \), and hence $G_E$ is indeed a subgroup of
  $\mathrm{SL}_2(K)$.

The remaining statements are clear.
\begin{comment}
The commutative diagram
\begin{center}
\begin{tikzpicture}[auto,node distance=2cm]
\node (a) {\( 0 \)};
\node (b) [right of=a] {\( \lbrace \pm 1 \rbrace \)};
\node (c) [right of=b] {\( \mathrm{SL}_2(K) \)};
\node (d) [right of=c] {\( \mathrm{PSL}_2(K) \)};
\node (e) [right of=d] {\( 0 \)};
\node (f) [above of=a] {\( 0\)};
\node (g) [right of=f] {\( \lbrace \pm 1 \rbrace \)};
\node (h) [right of=g] {\( G_E \)};
\node (i) [right of=h] {\( E[2] \)};
\node (j) [right of=i] {\( 0 \)};
\draw [->] (a) -- (b);
\draw [->] (b) -- (c);
\draw [->] (c) -- (d);
\draw [->] (d) -- (e);
\draw [->] (f) -- (g);
\draw [->] (g) -- (h);
\draw [->] (h) -- (i);
\draw [->] (i) -- (j);
\path (i) edge[right hook->] node[right] {\small{\( T \mapsto [\tilde{ M}_T] \)}} (d);
\path (i) edge[right hook->] node[right] {\small{\( T \mapsto [\tilde{ M}_T] \)}} (d);
\path (i) edge[right hook->] node[right] {\small{\( T \mapsto [\tilde{ M}_T] \)}} (d);
\end{tikzpicture}
\end{center}
implies the remaining statements.
\end{comment}
\end{proof}

\begin{proof}[Proof of Proposition~\ref{P:main}]
Let $\rho$ denote the standard representation 
  of \( G_E \) on the vector space \(V \) of $K$-linear forms in $x_1, x_2$.
Then the symmetric square $\rho^2$ factors through $E[2]$. Hence we can view  \( \rho^2 \)
as a representation of \( E[2] \) on \( \Sym^2(V) \), and we have
\[ \rho^2= \bigoplus_{T \in E[2] \setminus \lbrace O \rbrace} e_2(\cdot, T). \]
It is easy to check that for each nontrivial \(2\)-torsion point \(T \) the polynomial 
\[ y_T :=  x_1^2 - 2 x(T) x_1 x_2 - (f'(x(T))-x(T)^2) x_2^2 \in \Sym^2(V) \]
is an eigenform of \( \rho^2 \). 
Fix any ordering of the non-trivial 2-torsion points and call them $T_1,T_2,T_3$; let $y_j
  := Y_{T_j}$.
  Since \( \mathcal{Y} := (y_1,y_2,y_3)
\) is linearly independent, \( \mathcal{Y} \) forms a basis for \( \Sym^2(V) \).
We find that the coefficients of $x_1^2$ and $x_2^2$ with respect to $\mathcal{Y}$ are
given by
\[ I_\mathcal{Y}(x_1^2)
= \tau^{-1} \begin{pmatrix}
f'(x(T_1))-x(T_1)^2 \\ f'(x(T_2))-x(T_2)^2 \\ f'(x(T_3))-x(T_3)^2
\end{pmatrix}
\times \begin{pmatrix}
x(T_1) \\ x(T_2) \\ x(T_3)
\end{pmatrix}\]
and 
\[ I_\mathcal{Y}(x_2^2)
= \tau^{-1} \begin{pmatrix}
1 \\ 1 \\ 1
\end{pmatrix} 
\times 
\begin{pmatrix}
x(T_1) \\ x(T_2) \\ x(T_3)
\end{pmatrix},
\]
where \( \tau:=\sum_{i} (f'( x( \sigma^i (T_1))) -x( \sigma^i (T_1))^2) (x(\sigma^i(T_2))
- x(\sigma^i(T_3)) ) \neq 0 \) for the cycle \( \sigma = (T_1 \, T_2 \, T_3)\).
In other words, we have $x_i^2 = \sum^3_{j=1} a_{ij}y_j(x_1,x_2)$ for $i =1,2$, where 
\begin{align}\label{aij}
& \begin{pmatrix}
a_{11} & a_{12} & a_{13} \\ a_{21} & a_{22} & a_{23}
\end{pmatrix} \nonumber \\
= & \begin{pmatrix}
\frac{2x(T_2)x(T_3)-\frac{b_4}{2}}{2(x(T_1)-x(T_2))(x(T_1)-x(T_3))}
& \frac{2x(T_1)x(T_3)-\frac{b_4}{2}}{2(x(T_2)-x(T_1))(x(T_2)-x(T_3))}
& \frac{2x(T_1)x(T_2)-\frac{b_4}{2}}{2(x(T_3)-x(T_1))(x(T_3)-x(T_2))} 
\\
\frac{-1}{2(x(T_1)-x(T_2))(x(T_1)-x(T_3))}
& \frac{-1}{2(x(T_2)-x(T_1))(x(T_2)-x(T_3))}
& \frac{-1}{2(x(T_3)-x(T_1))(x(T_3)-x(T_2))}
\end{pmatrix}.
\end{align}
As for $\rho^2$, we have that \( \rho^4 \) factors through \( E[2] \). 
Since projectively \( \delta(\tilde{M}_T (x_1,x_2))=\delta(x_1,x_2) \),
the polynomials $\delta_1, \delta_2$ are \( E[2] \)-invariant under the
fourth symmetric power \( \rho^4 \). Moreover, they are linearly independent. As the space of $E[2]$-invariant quartic
polynomials is 2-dimensional, it is spanned by $\delta_1$ and $\delta_2$.
Computing the squares \( y_j^2\), we find that
$y_j(x_1,x_2)^2 = \sum^2_{k=1} b_{jk}\delta_k(x_1,x_2)$, where 
\begin{equation}\label{bjk}
 \begin{pmatrix}
b_{11} & b_{12} \\ b_{21} & b_{22} \\ b_{31} & b_{32}
\end{pmatrix} 
= \begin{pmatrix}
1 & -x(T_1) \\ 1 & -x(T_2) \\ 1 & -x(T_3)
\end{pmatrix}.
\end{equation}
This completes the proof of Proposition~\ref{P:main}.
\end{proof}

%%%%%%%%%%%%%%%%%%%%%%%%%%%%%%%%%%%%%%%%%%%%%%%%%%%%%%%%%%%%%%%%%%%%%%%%%%%%%
\section{Global height differences}\label{S:Diff}
Let $K$ be a number field. We define $M_K$ to be the set of places of $K$, where we
normalize the absolute value $|{\cdot}|_v$ associated to $v\in M_K$ by requiring that it
extends the usual absolute value on~$\Q$ when $v$ is an infinite place and by setting $|p|_v = p^{-1}$
when $v$ is a finite place above a prime number~$p$. 
For $v \in M_K$, we set $n_v = [K_v : \Q_w]$ 
where $w$ is the place of~$\Q$ below~$v$. Then the product formula
$\prod_{v \in M_K} |x|_v^{n_v} = 1$ holds for all $x \in K^\times$.

Consider an elliptic curve $E/K$, given by an integral Weierstrass equation~\eqref{W_eqn}.
We define the {\em naive  height} of $P \in E(K)\setminus\{O\}$ by
\[
  h(P) =  h(\kappa(P)) = \frac{1}{[K:\Q]}\sum_{v \in M_K}n_v \log\max\{|x_1|_v,|x_2|_v\}\,,
\]
where $(x_1, x_2)\in K^2$ represents $\kappa(P)$. The {\em canonical height} of $P$ is defined as the limit
\[ \hat{h}(P) = \lim_{n \to \infty} 4^{-n}{h(2^nP)}. \]
In this work, we are not really interested in the canonical height itself, but rather in
upper bounds on the difference $h-\hat{h}$. As in~\cite{CPS} and~\cite{MS1}, we
decompose the difference into a finite sum of local terms
\[
 h(P) -\hat{h}(P) = \frac{1}{[K:\Q]}\sum_{v \in M_K}n_v\Psi_v(P)\,,
\]
where the functions $\Psi_v:E(K_v) \to \R$ are continuous and bounded. It is
then clear that it suffices to compute upper bounds on all $\Psi_v$ to deduce an upper bound
on the difference $h-\hat{h}$. 
Recall that we may write
\[
  \Psi_v(P) = -\sum^{\infty}_{n=0} 4^{-n-1} \log\Phi_v(2^nP)
\]
for $P \in E(K_v)$, where $\Phi:E(K_v)\to \R$ is the continuous bounded function defined by
\[
  \Phi_v(P) := \frac{\max\{|\delta_1(x_1,x_2)|_v,
  |\delta_2(x_1,x_2)|_v\}}{\max\{|x_1|_v,|x_2|_v\}^4}
\]
and $(x_1,x_2) \in K_v^2$ represents $\kappa(P)$. See~\cite{CPS} and~\cite{MS1} for 
details.

%%%%%%%%%%%%%%%%%%%%%%%%%%%%%%%%%%%%%%%%%%%%%%%%%%%%%%%%%%%%%%%%%%%%%%%%%%%%%
\section{Archimedean local height differences}\label{S:Bounds}
In this section we show how to bound the local contribution $\Psi_v$ to the height
difference, where $v$ is an archimedean place of a number field.
We will drop $v$ from the notation for simplicity and assume that $K_v =\C$,
unless stated otherwise.
So consider an elliptic curve $E/\C$, given by a Weierstrass equation~\eqref{W_eqn}.
Note that Proposition~\ref{P:main} lets us bound
$|x_i|^4$ ($i = 1,2$) in terms of $|\delta_j(x_1,x_2)|$, $j = 1,2$. From this 
we easily get an upper bound for $\Phi$ using the triangle inequality. Via the geometric
series we deduce:
\begin{cor}\label{C:first_bound}
  We have
  \[
\max_{P \in E(\C)} \lbrace \Psi(P) \rbrace 
\leq 
  \frac{4}{3} \left( \sqrt{\sum_{j=1}^3 |a_{ij}| \sqrt{|b_{j1}|   +|b_{j2}| } }
  \right)_{i=1,2},
  \]
where the constants $a_{ij},\, b_{jk} \in \C$ are as in Proposition~\ref{P:main}.
\end{cor}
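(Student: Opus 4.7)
The plan is to convert the two identities of Proposition~\ref{P:main} into numerical inequalities via the triangle inequality, deduce a $P$-independent lower bound for $\Phi$, and then sum the defining series for $\Psi$ as a geometric series.

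First I would set $M(P):=\max\{|\delta_1(x_1,x_2)|,|\delta_2(x_1,x_2)|\}$ and apply the triangle inequality to the identity $y_j(x_1,x_2)^2=\sum_k b_{jk}\delta_k(x_1,x_2)$, taking a square root to obtain $|y_j(x_1,x_2)|\le\sqrt{|b_{j1}|+|b_{j2}|}\cdot\sqrt{M(P)}$. Inserting this into the triangle-inequality form of $x_i^2=\sum_j a_{ij}y_j(x_1,x_2)$ then yields $|x_i|^2\le C_i\sqrt{M(P)}$ for $i=1,2$, where $C_i:=\sum_j|a_{ij}|\sqrt{|b_{j1}|+|b_{j2}|}$ is precisely the expression under the outer square root appearing in the statement.

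For each $P$, picking the index $i\in\{1,2\}$ that realises $\max(|x_1|,|x_2|)$ turns this into $\max(|x_1|,|x_2|)^4\le C_i^2\,M(P)$, hence $\Phi(P)\ge 1/C_i^2$. Since the latter bound depends only on $E$ (and on the choice of $i$), it applies uniformly to every iterate $2^nP$; substituting into $\Psi(P)=-\sum_{n\ge 0}4^{-n-1}\log\Phi(2^nP)$ and summing the geometric series $\sum_{n\ge 0}4^{-n-1}=\tfrac{1}{3}$ then produces the asserted bound, with the prefactor $\tfrac{4}{3}$ arising from the identity $\log C_i^2=4\log\sqrt{C_i}$. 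As the whole argument amounts to two applications of the triangle inequality followed by a geometric-series summation, no serious obstacle is expected; the only bookkeeping point is that the relevant $i$ is the one matching the dominant coordinate at the point under consideration, which is why the bound is stated for both indices $i=1,2$.
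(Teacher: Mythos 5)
Your argument is correct and is exactly the route the paper takes: two applications of the triangle inequality to the identities of Proposition~\ref{P:main} give $|x_i|^4 \le C_i^2\,\max_k|\delta_k(x_1,x_2)|$, hence a uniform lower bound $\Phi(P)\ge 1/\max_i C_i^2$, and summing the geometric series $\sum_{n\ge 0}4^{-n-1}=\tfrac13$ in the defining series for $\Psi$ yields $\Psi(P)\le \tfrac43\log\max_i\sqrt{C_i}=c_1$. (You correctly arrive at a logarithm of the supremum over $i$; note that this $\log$ and the sup norm are missing from the displayed formula in the corollary as printed, but the text immediately after, identifying the bound with $c_1=\tfrac43\log\|\varphi(1,1)\|$, confirms your reading is the intended one.)
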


This idea was first used by Stoll~\cite{Sto1, Sto2} to bound the height difference for Jacobians
of genus 2 curves and hyperelliptic genus 3 curves, respectively. We will follow his approach closely; in fact,
the elliptic case is much simpler. 
Furthermore, we will iterate the bound for $\Phi$ to get a
better bound for $\Psi$ than the one obtained from the geometric series; this was used by
Stoll and the first-named author for genus~2~\cite{MS2}, and by Stoll~\cite{Sto2} for genus 3.

For the iteration we define the function
\begin{align*}
\varphi: \mathbb{R}_{\geq 0}^2 \to \mathbb{R}_{\geq 0}^2, (d_1,d_2) \mapsto \left(
  \sqrt{\sum_{j=1}^3 |a_{ij}| \sqrt{|b_{j1}| \, d_1 +|b_{j2}| \, d_2} } \right)_{i=1,2}
\end{align*}
and we set
\[
  c_N :=\frac{4^N}{4^N-1} \log(\| \varphi^{\circ N}(1,1)\|)
  \]
for 
$N \ge 1$, where $\|\cdot\|$ denotes the supremum norm. Hence $c_1$ is precisely the upper
bound from Corollary~\ref{C:first_bound}.
Our algorithm for bounding $\Psi$ is based on the following result, whose statement and
proof follow \cite[Lemma 16.1]{MS2}.
\begin{thm}\label{T:Bound}
  The sequence \( (c_N)_{N \ge 1} \) is monotonically decreasing and we have 
\begin{align*}
\max_{P \in E(\C)} \lbrace \Psi(P) \rbrace
\leq c_N
\end{align*}
  for every \(N \ge 1\).
\end{thm}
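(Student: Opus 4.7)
The plan is to iteratively apply the pointwise bound extracted from Proposition~\ref{P:main} at each doubling $P \mapsto 2P$ and then sum the resulting geometric series, following Stoll's approach in higher genus (with the simplifications available in the elliptic case).

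First, I extract from Proposition~\ref{P:main} (together with the triangle inequality) the key estimate: for any representative $(x_1, x_2) \in \C^2$ of $\kappa(P)$,
\[
  |x_i|^2 \leq \sum_{j=1}^3 |a_{ij}|\,|y_j(x_1, x_2)| \quad \text{and} \quad |y_j(x_1, x_2)|^2 \leq \sum_{k=1}^2 |b_{jk}|\,|\delta_k(x_1, x_2)|,
\]
so that $|x_i| \leq \varphi_i(|\delta_1(x_1, x_2)|, |\delta_2(x_1, x_2)|)$. I then fix a representative $(X_{0, 1}, X_{0, 2})$ of $\kappa(P)$ and define $(X_{n+1, k}) := (\delta_k(X_{n, 1}, X_{n, 2}))$, which represents $\kappa(2^{n+1}P)$; set $M_n := \max(|X_{n, 1}|, |X_{n, 2}|)$ and $L_n := \log M_n$. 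The function $\varphi$ is componentwise monotonic and satisfies $\varphi(\lambda d_1, \lambda d_2) = \lambda^{1/4}\,\varphi(d_1, d_2)$ (both evident from its formula), so iterating $N$ times yields $M_0 \leq \|\varphi^{\circ N}(M_N, M_N)\| = M_N^{1/4^N}\,\|\varphi^{\circ N}(1, 1)\|$, equivalently
\[
  L_0 - 4^{-N} L_N \leq \ell_N := \log\|\varphi^{\circ N}(1, 1)\|.
\]

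Next, I pass to $\Psi$. A direct computation gives $-\log\Phi(2^n P) = 4L_n - L_{n+1}$, so the partial sum of the series defining $\Psi$ telescopes: $-\sum_{n=0}^{k-1} 4^{-n-1}\log\Phi(2^n P) = L_0 - 4^{-k} L_k$. Applying the estimate above with starting point $2^{jN} P$ gives $L_{jN} - 4^{-N} L_{(j+1)N} \leq \ell_N$; weighting by $4^{-jN}$ and summing over $j = 0, \ldots, K-1$ yields
\[
  L_0 - 4^{-KN} L_{KN} \leq \ell_N \cdot \frac{1 - 4^{-KN}}{1 - 4^{-N}}.
\]
Letting $K \to \infty$, the left-hand side converges to $\Psi(P)$ (since $4^{-KN} L_{KN}$ converges to the local contribution of the canonical height), and the right-hand side tends to $c_N$, so $\Psi(P) \leq c_N$.

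The main remaining obstacle is the monotonicity $c_{N+1} \leq c_N$. The natural inequality $\|v_{N+1}\| \leq \|v_1\|^{1/4^N}\|v_N\|$ (where $v_k := \varphi^{\circ k}(1, 1)$), obtained by applying $\varphi^{\circ N}$ to $v_1$ and invoking monotonicity and homogeneity, translates into $\ell_{N+1} \leq 4^{-N}\ell_1 + \ell_N$. I expect monotonicity of $(c_N)$ to follow from a self-improving iteration of this estimate in the spirit of \cite[Lemma~16.1]{MS2}, where one repeatedly plugs the best available bound on $\Psi$ back into the $\varphi$-iteration to tighten it. This is the most delicate step, but the specific algebraic form of $\varphi$ in the elliptic case should keep it tractable.
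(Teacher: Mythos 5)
Your argument that $\max_P \Psi(P)\le c_N$ is essentially the paper's: the pointwise bound $|x_i|\le\varphi_i(|\delta_1|,|\delta_2|)$ from Proposition~\ref{P:main}, iterated $N$ times using monotonicity and degree-$1/4$ homogeneity of $\varphi$, gives $M_0\le M_N^{1/4^N}\|\varphi^{\circ N}(1,1)\|$, and the telescoping sum for $\Psi$ finishes it. That half is fine.

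The monotonicity is the real content, and here your proposal has a genuine gap. You only establish the subadditivity-type estimate $\ell_{N+1}\le 4^{-N}\ell_1+\ell_N$ (obtained by applying $\varphi^{\circ N}$ to $v_1\le\|v_1\|(1,1)$) and then say you ``expect'' monotonicity to follow by iterating. But this inequality is too weak: plugging it into $c_{N+1}=\frac{4^{N+1}}{4^{N+1}-1}\ell_{N+1}$ only yields $c_{N+1}\le c_1$, not $c_{N+1}\le c_N$. Indeed, if $\ell_N$ sits at the extreme value $\ell_N=\frac{4}{3}\cdot\frac{4^N-1}{4^N}\,\ell_1$ allowed by $c_N\le c_1$, your bound gives $c_{N+1}\le\frac{4}{3}\ell_1=c_1$ exactly, with no improvement over $c_1$; so no amount of re-feeding this first-order estimate produces $c_{N+1}\le c_N$. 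The missing idea is a \emph{second-order} relation between $b_{N+1}$, $b_N$, $b_{N-1}$. The paper gets it by passing to $\psi(\alpha)=\log\varphi(\exp\alpha_1,\exp\alpha_2)$, observing that its Jacobi matrix has nonnegative entries whose rows sum to $1/4$ (again from homogeneity), hence $\psi$ is a $\tfrac14$-Lipschitz map in the sup norm. Applying the contraction to consecutive iterates $\psi^{\circ N}(0),\psi^{\circ(N-1)}(0)$ gives $b_{N+1}\le b_N+\tfrac14|b_N-b_{N-1}|$, and combining this with the induction hypothesis $c_N\le c_{N-1}$ pins down $c_{N+1}<c_N$. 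Without the contraction observation, the induction does not close.
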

\begin{proof} 
To verify that the upper bound holds, let $\alpha \in \C^2$. A simple induction shows that 
for $N\ge 1$ we have
  \[
| \alpha_i | \leq \varphi^{\circ N}\left( |{\delta}^{\circ N}(\alpha)_1|, \, |
    {\delta}^{\circ N}(\alpha)_2 | \right)_i,
  \]
  and since
  \[
| {\delta}^{\circ N}(\alpha)_i |
\leq
 \varphi(1,1)_i \, \| {\delta}^{\circ (N+1)}(\alpha) \|^{\frac{1}{4}},
  \]
we find that 
\begin{align*}
| \alpha_i |
\leq 
 \varphi^{\circ N} \big( \| {\delta}^{\circ (N+1)}(\alpha) \|^{\frac{1}{4}} \, \varphi(1,1) \big)_i.
\end{align*}
  Shifting $N$ by 1 and using that $\varphi$ is homogeneous of degree~$1/4$, it follows that 
  \begin{equation}\label{alpha_bd}
|\alpha_i| \leq \| {\delta}^{\circ N}(\alpha) \|^{\frac{1}{4^{N}}} \, \varphi^{\circ N}(1,1)_i.
  \end{equation}
  We now apply~\eqref{alpha_bd} to $\alpha = 
  {\delta}^{\circ Nn}(x_1,x_2)$, where $n\ge 1$ and $x \in \C^2$ represents $\kappa(P)$ for $P \in
  E(\C)$, to obtain
\begin{align*}
  \| {\delta}^{\circ Nn}(x_1,x_2) \|
\leq  \| {\delta}^{\circ N(n+1)}(x ) \|^{\frac{1}{4^N}} \, \| \varphi^{\circ N}(1,1) \|.
\end{align*}
Upon noting that
  \[
\Psi(P)
  =  \sum_{n=0}^\infty 4^{-Nn} \log \left( \frac{\| {\delta}^{\circ Nn}(x_1,x_2) \|}{\|
  {\delta}^{\circ N(n+1)}(x_1,x_2) \|^{\frac{1}{4^N}}} \right)
  \]
the result follows. 

To show that $c_N$ is monotonically decreasing, consider the function
\begin{align*}
\psi: \mathbb{R}^2 \to \mathbb{R}^2, \alpha \mapsto \big( \log(\varphi(\exp(\alpha_1),
  \, \exp(\alpha_2))_i) \big)_{i=1,2}.
\end{align*}
Note that the Jacobi matrix of $\psi$ has positive entries and that its rows sum to $1/4$, because $\varphi_1$ and
$\varphi_2$ are homogeneous of degree $1/4$. 
It follows that 
\begin{equation}\label{contraction}
  \|\psi(\alpha)-\psi(\beta)\| \le \frac14 \| \alpha-\beta\|
\end{equation}
for all $\alpha, \beta \in \C^2$.

For $N\ge 1$ we have $c_N = \frac{4^N}{4^N-1}\|\psi^{\circ N}(0,0)\|$.
In particular,~\eqref{contraction} implies
\[\|\psi^{\circ 2}(0,0) - \psi(0,0)\| \le \frac 14\| \psi(0,0)\|,\] whence
$c_2 \le c_1$.
We now proceed by induction on $N$; so let $N\ge 2$ such that $c_{N} \le c_{N-1}$.
  Let $b_N := \|\psi^{\circ N}(0,0)\|$. If $b_{N+1} \le b_N$, then we're done, so we may
  assume that $b_{N+1}>b_N$.
  Applying~\eqref{contraction} to $\alpha = \psi^{\circ N} (0,0)$ and $\beta = \psi^{\circ
  (N-1)} (0,0)$, we find that
  \[
    b_{N+1} \le \min\left\{\frac54b_N -\frac14 b_{N-1}, \frac34b_N + \frac14 b_{N-1}\right\},\]
  according to whether $b_{N}\ge b_{N-1}$ or not.
  First assume that $b_N\ge b_{N-1}$, so that $$b_{N+1} \le \frac54b_N -\frac14 b_{N-1}.$$
  From $c_N \le c_{N-1}$ we get 
  \[ -\frac14b_{N-1} \le -\frac{4^{N-1}-1}{4^N-1}b_N, \]
  which implies 
  \[
    b_{N+1} \le \left(\frac54 - \frac{4^{N-1}-1}{4^N-1}\right) b_N
    =\frac{4^{N+1}-2}{4^{N+1}-4}b_N < \frac{4^{N+1}-1}{4^{N+1}-4}b_N ,
    \]
  and hence $c_{N+1} < c_N$. The case $b_N < b_{N-1}$ is similar.
\end{proof}
In particular, $(c_N)_N$ and $(b_N)_N$ both converge to the same limit, and this limit is an
upper bound for $\Psi$. In practice, the sequence converges quickly, and a few iterations
suffice. This gives us a very simple method to bound $\Psi$ from above. 
\begin{rk}\label{R:Real}
  Suppose that $v$ is a real place and that $E(\R)$ has only one component. Then $b_{22}$ and $b_{32}$ are non-real, but
  all $P \in E(K_v)$ have real coordinates, so we have 
  \begin{align*}
    |y_j(x_1,x_2)^2| &= \left|\sum^2_{k=1} b_{jk}\delta_k(x_1,x_2)\right|\\& \le \max \left\{
    |b_{j1}\delta_1(x_1,x_2) +
  b_{j2}\delta_2(x_1,x_2)|, |b_{j1}\delta_1(x_1,x_2) - b_{j2}\delta_2(x_1,x_2)|\right\}
  \end{align*}
    for $j \in \{2,3\}$ and $x\in \R^2$ representing $\kappa(P)$. Modifying the definition
    of the function $\varphi$ accordingly, we often get a better bound in practice.
\end{rk}

%%%%%%%%%%%%%%%%%%%%%%%%%%%%%%%%%%%%%%%%%%%%%%%%%%%%%%%%%%%%%%%%%%%%%%%%%%%%%
\section{Alternative algorithms}\label{S:other}
In this section we briefly discuss other approaches to bounding $\Psi_v$ from above for an
archimedean place $v$.
The approach of   Cremona-Prickett-Siksek \cite{CPS} is to find the largest value
$\gamma$ of $\Phi_v$; then $\gamma/3$ is an upper bound for $\Psi_v$.
For real places this translates into a simple algorithm which is trivial to
implement. For complex places, they give two approaches: one based on Gr\"obner bases and
another one based on refining an initial crude bound via repeated quadrisection. 
The latter is faster and yields better bounds in practice than the former. 
The method of \cite{CPS} is implemented in {\tt Magma} and as part of Cremona's {\tt
mwrank} (which is also contained in {\tt Sage}).
A variation of this approach was presented by Uchida~\cite{Uch}; he computes the
largest value of an analogue of $\Phi_v$, but with duplication replaced by multiplication by
$m$ for $m>2$.

An alternative approach is to use that for $K_v=\C$, which we may
assume without loss of generality, $\Psi_v$ can be expressed in terms of the Weierstrass
$\wp$-function and  an
archimedean canonical local height function, which in turn is closely related
to the Weierstrass $\sigma$-function. 
This was used by Silverman~\cite{Sil2} to provide an easily computed upper bound for
$\Psi_v$ in terms of the values of the $j$-invariant and the discriminant of $E$;
according to~\cite{CPS}, this bound is usually larger than the one due to
Cremona-Prickett-Siksek, at least for real embeddings.
In a spirit similar to the repeated quadrisection method in~\cite{CPS}, Bruin~\cite{Bru}
uses a recursive approach (starting from a
fundamental domain of the period lattice of $E/\C$) to approximate the maximal value taken
by $\Psi_v$ on $E(\C)$ to any desired precision. Bruin's algorithm therefore gives nearly
optimal bounds for complex embeddings, whereas for real embeddings the bound computed using
the algorithm of Cremona-Prickett-Siksek is often smaller. 
A {\tt Pari/GP} implementation of Bruin's method can be found at
{\url{https://www.math.leidenuniv.nl/~pbruin/hdiff.gp}} (note that this uses a different
normalization from ours; the difference is $\log|\Delta|_v/6$, where $\Delta$ is the
discriminant of the given Weierstrass model).
While this method is reasonably fast for curves with small coefficients,
it can be slow even for
medium-sized coefficients. For instance, it took about 18 minutes to compute an upper bound for the curve
with Cremona label 11a2, which has minimal Weierstrass equation 
\[
  y^2 + y = x^3 - x^2 - 7820x - 263580.
\]
So while this approach leads to superior bounds, it is somewhat less useful in practice,
because the need for computing a very sharp upper bound mostly arises for curves whose
coefficients are relatively large. 

%%%%%%%%%%%%%%%%%%%%%%%%%%%%%%%%%%%%%%%%%%%%%%%%%%%%%%%%%%%%%%%%%%%%%%%%%%%%%
\section{Experiments and comparison}\label{S:Ex}
We implemented an algorithm based on Theorem~\ref{T:Bound} and Remark~\ref{R:Real} to compute an upper bound for $\Psi_v$ for an archimedean place $v$ 
in {\tt Magma}~\cite{Magma}. The code is available at {\url{https://github.com/steffenmueller/arch-ht-diff}}.
We experimentally compared our code with the {\tt Magma}-implementation of the algorithm of~\cite{CPS}, using a single core on an Intel Xeon(R) CPU E3-1275 V2 3.50GHz processor.
Note that the latter sometimes shows that the upper bound is exactly 0 (which is attained by
$P=O$), whereas our code always returns a positive real number.
We compared all curves of conductor at most 35.000 in Cremona's database of
elliptic curves over the rationals. Here and in the following $\beta$ is the upper bound
returned by our code and $\beta_{CPS}$ is the upper bound returned by the Magma
implementation of the algorithm from~\cite{CPS}.  We also list the average value of $\beta$ and
$\beta_{CPS}$ (including the cases where the latter is 0).

\begin{center}
    \begin{tabular}{|c|c|c|c|c|c|} \hline
      max. conductor & $\beta_{CPS}=0$&  $\beta>\beta_{CPS}$& $\beta<\beta_{CPS}$ & avg.
      $\beta_{CPS}$ & avg. $\beta$ \\
\hline
      $10.000$ & $33.5\%$ & $38.8 \%$ & $27.8\%$ & $0.947$& $0.992$ \\
      $20.000$ & $33.7\%$ & $37.9 \%$ & $28.3\%$ & $0.979$& $1.007$ \\
      $35.000$ & $33.8\%$& $37.5 \%$ & $28.8\%$ & $1.001$& $1.007$ \\
      \hline
    \end{tabular}
\end{center}

    We found similar results for databases of `small' elliptic curves over real quadratic fields.

    Perhaps surprisingly, the picture is quite different for `random' curves, and it
    would be interesting to investigate why this is the case. The
    following table contains the respective results for $10^5$ randomly chosen elliptic curves over $\Q$
    with Weierstrass coefficients $a_1,\ldots,a_6$ bounded in absolute value by 
    $B\in \{10^2,10^3,10^4\}$.

\begin{center}
    \begin{tabular}{|c|c|c|c|c|c|c|} \hline
      $B$ & $\beta_{CPS}=0$&  $\beta>\beta_{CPS}$& $\beta<\beta_{CPS}$ & avg.
      $\beta_{CPS}$ & avg. $\beta$ \\
\hline
      $10^2$ &$46.3\%$ & $3.3\%$& $50.4\%$& $0.145$ & $0.045$\\
      $10^3$ &$48.4\%$ & $1.0\%$& $50.7\%$& $0.146$ & $0.011$\\
      $10^4$ &$49.2\%$ & $0.3\%$& $50.5\%$& $0.147$ & $0.002$\\
      \hline
    \end{tabular}
\end{center}

    In the above tables, the average time it took to compute the bounds was very
    short (less than 0.002 seconds on average) for both algorithms.
    A comparison over $\Q(\sqrt{5})$ with the same parameters resulted in the following:
    
\begin{center}
    \begin{tabular}{|c|c|c|c|c|c|c|} \hline
      $B$ & $\beta_{CPS}=0$&  $\beta>\beta_{CPS}$& $\beta<\beta_{CPS}$ & avg.
      $\beta_{CPS}$ & avg. $\beta$ \\
\hline
      $10^2$ &$21.4\%$ & $6.8\%$& $71.9\%$& $0.148$ & $0.039$\\ 
      $10^3$ &$23.5\%$ & $1.9\%$& $74.6\%$& $0.148$ & $0.010$\\
      $10^4$ &$24.6\%$ & $0.4\%$& $75.0\%$& $0.150$ & $0.002$\\
      \hline
    \end{tabular}
\end{center}

    So it seems that for large coefficients, our algorithm yields better results most of
    the time, unless $\beta_{CPS}=0$. We also see that, on average, our bound is much
    smaller. Here is a particularly striking example.

    \begin{ex}
      Let $E/\Q$ be given by
      \begin{align*}
        y^2 &+ xy + y = x^3 - x^2 + 31368015812338065133318565292206590792820353345x
        \\&+302038802698566087335643188429543498624522041683874493555186062568159847
      \end{align*}
      This example was found by Elkies in 2009 and currently holds the record for the elliptic
      curve of largest known rank ($r=19$) which is provably correct, independently of
      any conjectures. In this case $\beta_{CPS} = 18.018$, whereas $\beta =0.147$.
    \end{ex}

We also compared the two implementations for a few thousand curves with coefficient sizes
as above, but over some imaginary
quadratic fields. Here we found that our bound was better in all examples. Moreover, it
also took less time to compute in all cases (on average 0.003 seconds compared to 1.2 seconds).

In practice, the methods of this paper, of Cremona-Prickett-Siksek and of Bruin should
be combined. For a real embedding, one should first compute an upper bound using
Cremona-Prickett-Siksek. If this is non-zero, one should then apply our algorithm, and use
whichever bound is smaller. For a complex embedding, our algorithm appears to be a good
first choice.
If the resulting bound seems too large for saturation, and if the
coefficients of the curve are of reasonable size, one can then compute a bound using the
algorithm of Bruin. This is basically optimal for complex embeddings,
and sometimes beats the other bounds for real embeddings as well, but, as discussed above,
it typically takes much longer to compute.

\begin{bibdiv}
  \begin{biblist}

\bib{Bru}{article}{
author={Bruin, P.},
title={Bornes optimales pour la diff\'erence entre la
          hauteur de Weil et la hauteur de N\'eron-Tate sur les courbes elliptiques sur \( \overline{\Q} \)},
journal={Acta Arith.},
volume={160},
date={2013},
    pages={385-397}

}

\bib{CPS}{article}{
  author={Cremona, J. E.},
   author={Prickett, M.},
   author={Siksek, Samir},
   title={Height difference bounds for elliptic curves over number fields},
   journal={J. Number Theory},
   volume={116},
   date={2006},
   number={1},
   pages={42--68},
   issn={0022-314X},
}

\bib{vKM}{misc}{
  author={von K\"anel, R.},
    author={Matschke, B.},
    title={Solving {S}-unit, {M}ordell, {T}hue, {T}hue-{M}ahler and generalized
    {R}amanujan-{N}agell equations via {S}himura-{T}aniyama conjecture},
   note={Preprint, arXiv:1605.06079v1 [math.NT]},
   date={2016},
}

\bib{Magma}{article}{
   author={Bosma, Wieb},
   author={Cannon, John},
   author={Playoust, Catherine},
   title={The Magma algebra system. I. The user language},
   note={Computational algebra and number theory (London, 1993)},
   journal={J. Symbolic Comput.},
   volume={24},
   date={1997},
   number={3-4},
   pages={235--265},
   issn={0747-7171},
   review={\MR{1484478}},
   doi={10.1006/jsco.1996.0125},
   url={See also the Magma home page at http://magma.maths.usyd.edu.au/magma/},
}

\bib{MS1}{article}{
author={M\"uller, J. S.},
author={Stoll, M.},
title={ Computing canonical heights on elliptic curves in quasi-linear time},
journal={LMS J. Comput. Math},
volume={19},
date={2016},
    pages={391-405}
}

\bib{MS2}{article}{
author={M\"uller, J. S.},
author={Stoll, M.},
title={Canonical heights on genus two Jacobians},
journal={Algebra \& Number Theory},
volume={10},
date={2016},
pages={2153-2234}
}

\bib{Sik}{article}{
author={Siksek, S.},
title={Infinite descent on elliptic curves},
date={1995},
journal={The Rocky Mountain Journal of Mathematics},
volume={25},
pages={1501-1538}
}

\bib{Sil}{book}{
author={Silverman, J.},
title={The Arithmetic of Elliptic Curves},
date={1992},
publisher={Springer-Verlag}
}

\bib{Sil2}{article}{
author={Silverman, J.},
title={The difference between the Weil height and the canonical height on elliptic curves},
journal={Math. Comp.},
volume={55},
date={1990},
pages={723-743}
}

\bib{Sto1}{article}{
   author={Stoll, M.},
   title={On the height constant for curves of genus two},
   journal={Acta Arith.},
   volume={90},
   date={1999},
   number={2},
   pages={183--201},
   issn={0065-1036},
}

\bib{Sto2}{article}{
author={Stoll, M.},
title={ An explicit theory of heights for hyperelliptic Jacobians of genus three},
book={
  title={Algorithmic and Experimental Methods in Algebra, Geometry, and Number Theory},
  publisher={Springer International Publishing},
  editor={B\"ockle, G.},
  editor={Decker, W.},
  editor={Malle, G.},
},
date={2017},
pages={665-715}
}

\bib{Stu}{thesis}{
author={Stumpe, C.},
title={ Archimedische H\"ohenkonstanten elliptischer Kurven},
type = {Master's thesis},
institution = {Carl von Ossietzky Universit\"at Oldenburg},
date={2018},
}

\bib{Uch}{article}{
  author={Uchida, Y.},
title={The difference between the ordinary height and the canonical height on elliptic
curves},
journal={Journal of Number Theory},
volume={128},
date={2008},
pages={263-279},
}
  \end{biblist}
\end{bibdiv}

\end{document}